\newtheorem{theorem}{Theorem}
\newcommand*\bigcdot{\mathpalette\bigcdot@{.5}}
\newcommand*\bigcdot@[2]{\mathbin{\vcenter{\hbox{\scalebox{#2}{$\m@th#1\bullet$}}}}}
\title{On the Conditional Expectation of Mean Shifted Gaussian Distributions}
\author{Kananart Kuwaranancharoen
\thanks{The author is with the School of Electrical and Computer Engineering at Purdue University.  Email: {\tt kkuwaran@purdue.edu}.}}
\begin{document}

\maketitle

\begin{abstract}
    In this paper, we consider a property of univariate Gaussian distributions namely \textit{conditional expectation shift} (or \textit{centroid shift}). Specifically, we compare two Gaussian distributions in which they differ only in their means. Equivalently, we can view this situation as one of the distribution is shifted to the right. These two distributions are
    conditioned on the same event in which the realizations fall in the \textit{right interval} or \textit{left interval}. 
    We show that if a Gaussian distribution is shifted to the right while the conditioning event remains the same then the conditional expectation is shifted to the right concurrently.
\end{abstract}

\section{Problem}

Define the \textit{centroid} of a function $f: \mathbb{R} \to \mathbb{R}$ under a support $\mathcal{S} \subseteq \mathbb{R}$ as
\begin{align}
    \langle f \rangle_{\mathcal{S}} 
    \triangleq \frac{1}{\int_{\mathcal{S}} f(x) dx} \cdot \int_{\mathcal{S}} x f(x) dx
    = \frac{1}{\int f(x) \cdot \mathbbm{1}_{\mathcal{S}}(x) dx} \cdot
    \int x f(x) \cdot \mathbbm{1}_{\mathcal{S}}(x)  dx
    .  \label{def: average function}
\end{align}
where the \textit{indicator function} $\mathbbm{1}_{\mathcal{S}}: \mathbb{R} \to \{ 0, 1 \}$ is defined as
\begin{align*}
    \mathbbm{1}_{\mathcal{S}}(x) =
    \begin{cases}
    1 \quad \text{if} \;\; x \in \mathcal{S}, \\
    0 \quad \text{if} \;\; x \notin \mathcal{S}.
    \end{cases}
\end{align*}
However, if we consider $f_{\boldsymbol{X}}$ which is the \textit{probability density function} (pdf) of a random variable $\boldsymbol{X}$, the above definition coincides with the conditional expectation given that $ \boldsymbol{X} \in \mathcal{S}$ as shown below:
\begin{align*}
    \mathbb{E}[ \boldsymbol{X} | \boldsymbol{X} \in \mathcal{S} ]
    &= \int x f_{\boldsymbol{X}} (x | \boldsymbol{X} \in \mathcal{S}) dx \\
    &= \frac{1}{P( \boldsymbol{X} \in \mathcal{S})} \cdot \int_{\mathcal{S}} x f(x) dx \\
    &= \frac{1}{\int_{\mathcal{S}} f_{\boldsymbol{X}}(x) dx} \cdot \int_{\mathcal{S}} x f_{\boldsymbol{X}}(x) dx \\
    &= \langle f_{\boldsymbol{X}} \rangle_{\mathcal{S}}.
\end{align*}

In particular, we consider two univariate Gaussian random variables $\boldsymbol{X}$ and $\boldsymbol{Y}$ in which their pdfs differ only in their means namely $\mathcal{N}(\mu, \sigma^2)$ and $\mathcal{N}(\mu+h, \sigma^2)$, respectively, i.e., the variance of distributions is the same. We are interested in the relationship between conditional expectation of these two random variables. Specifically, given that $\boldsymbol{X} \in \mathcal{S}$ and $\boldsymbol{Y} \in \mathcal{S}$ where $\mathcal{S} = (-\infty, \ell] \cup [u, \infty)$ for some constant $u > \ell$, does it hold true that the conditional expectation of $\boldsymbol{Y}$ is strictly greater than that of $\boldsymbol{X}$ when $h > 0$? In other words, we want to show that when $h>0$, $\langle f_{\boldsymbol{Y}} \rangle_{\mathcal{S}} > \langle f_{\boldsymbol{X}} \rangle_{\mathcal{S}}$ where $f_{\bigcdot}$ is the pdf of the random variable ``$\boldsymbol{\bigcdot}$". Hence the name ``conditional expectation shift" or ``controid shift". To clarify the concept, we provide an example illustrated in Figure~\ref{fig: centroid}. In the next section, we provide the theorem regrading to this question and also its proof.

\begin{figure}[t]
\includegraphics[width=10cm]{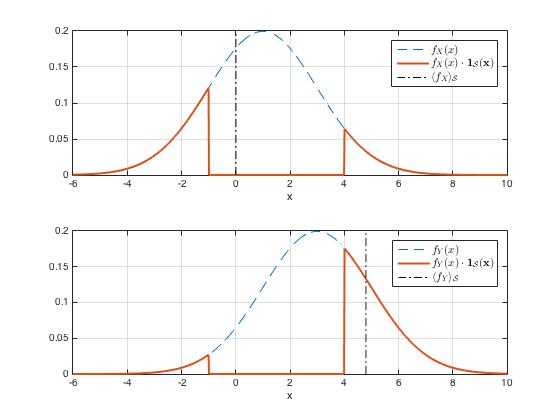}
\centering
\caption{In this example, we use $\mu = 1$, $\sigma^2 = 4$, and $h = 2$, i.e., $\boldsymbol{X} \sim \mathcal{N}(1,4)$ and $\boldsymbol{Y} \sim \mathcal{N}(3,4)$. Also, we set $\ell = -1$ and $u=4$ which means that $\mathcal{S} = (-\infty, -1] \cup [4,\infty)$. We also calculate the centroid of both functions; $f_{\boldsymbol{X}}(x) \cdot \mathbbm{1}_{\mathcal{S}}(x)$ and $f_{\boldsymbol{Y}}(x) \cdot \mathbbm{1}_{\mathcal{S}}(x)$. The centroids are $\langle f_{\boldsymbol{X}} \rangle_{\mathcal{S}} \approx 0.0025$ and $\langle f_{\boldsymbol{Y}} \rangle_{\mathcal{S}} \approx 4.7995$. In this case, we can see that $\langle f_{\boldsymbol{Y}} \rangle_{\mathcal{S}} > \langle f_{\boldsymbol{X}} \rangle_{\mathcal{S}}$.}
\label{fig: centroid}
\end{figure}

\section{Main Theorem and Proof}

\begin{theorem}
Suppose 
\begin{align}
    f(x) = \frac{1}{\sqrt{2 \pi \sigma^2}} \exp{ \Big( - \frac{1}{2 \sigma^2} (x- \mu)^2 \Big)} \label{eqn: Gaussian pdf}
\end{align}
with $\mu \in \mathbb{R}$ and $\sigma \in \mathbb{R}_{>0}$.
Let $g(x) = f(x-h)$ and $\mathcal{S} = \mathbb{R} \setminus (\ell, u)$. 
Then, for all $h \in \mathbb{R}_{>0}$, and $\ell, u \in \mathbb{R}$ such that $u > \ell$,
\begin{align}
    \langle g \rangle_{\mathcal{S}} > \langle f \rangle_{\mathcal{S}}.  \label{eqn: averagae ineq}
\end{align}
\end{theorem}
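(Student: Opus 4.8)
The plan is to treat the shift $h$ as a continuous parameter and to prove that the conditional expectation is a strictly increasing function of it. Concretely, set $m = \mu + h$, write $g(x) = f(x-h)$, and define
\begin{align*}
    D(h) = \int_{\mathcal{S}} g(x)\,dx, \qquad N(h) = \int_{\mathcal{S}} x\,g(x)\,dx, \qquad C(h) = \frac{N(h)}{D(h)} = \langle g \rangle_{\mathcal{S}}.
\end{align*}
Since $C(0) = \langle f \rangle_{\mathcal{S}}$ and the target \eqref{eqn: averagae ineq} is exactly the assertion $C(h) > C(0)$ for every $h > 0$, it suffices to show $C'(h) > 0$ for all $h \in \mathbb{R}$; the conclusion then follows because a function with a strictly positive derivative is strictly increasing.

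The key observation is that, for a Gaussian, the $h$-derivative of the shifted density is a multiple of the density itself. Differentiating the logarithm of \eqref{eqn: Gaussian pdf} gives $f'(t) = -\sigma^{-2}(t-\mu)f(t)$, and hence
\begin{align*}
    \frac{\partial}{\partial h} g(x) = -f'(x-h) = \frac{x-m}{\sigma^2}\,g(x).
\end{align*}
Because $\mathcal{S}$ does not depend on $h$, I can differentiate $D$ and $N$ under the integral sign (legitimate here since the integrand and its $h$-derivative are dominated, uniformly for $h$ in compact sets, by integrable functions, thanks to the Gaussian tail; there are no Leibniz boundary terms because $\ell$ and $u$ are fixed). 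Writing $M_2(h) = \int_{\mathcal{S}} x^2 g(x)\,dx$, this produces
\begin{align*}
    D'(h) = \frac{1}{\sigma^2}\big(N - mD\big), \qquad N'(h) = \frac{1}{\sigma^2}\big(M_2 - mN\big).
\end{align*}

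Substituting into the quotient rule, the terms carrying the location $m$ cancel and leave a manifestly nonnegative expression:
\begin{align*}
    C'(h) = \frac{N'D - N D'}{D^2} = \frac{1}{\sigma^2}\cdot\frac{(M_2 - mN)D - N(N - mD)}{D^2} = \frac{1}{\sigma^2}\left(\frac{M_2}{D} - C^2\right) = \frac{1}{\sigma^2}\cdot\frac{\int_{\mathcal{S}} (x - C)^2\, g(x)\,dx}{\int_{\mathcal{S}} g(x)\,dx}.
\end{align*}
The right-hand side is precisely $\sigma^{-2}$ times the variance of the conditional distribution of $g$ restricted to $\mathcal{S}$, and it is strictly positive since $g > 0$ everywhere while $\mathcal{S}$ has positive Lebesgue measure, so $(x-C)^2 g(x)$ cannot vanish almost everywhere. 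Thus $C'(h) > 0$ for all $h$, $C$ is strictly increasing, and \eqref{eqn: averagae ineq} follows.

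The step I expect to be the crux is recognizing this cancellation. A priori the sign of $N'D - ND'$ is unclear: both $N$ and $D$ are complicated tail integrals, and the centroid $C$ can fall outside $[\ell, u]$, so a term-by-term sign analysis of boundary contributions stalls. What makes the argument work is that the location-dependent pieces $mND$ cancel, exposing the second central moment; this is the familiar location-family identity that the derivative of a conditional mean with respect to the location parameter equals the conditional variance divided by $\sigma^2$. The only remaining point needing care is the justification of differentiation under the integral sign, which is routine given the rapid Gaussian decay and the fact that $\mathcal{S}$ is independent of $h$.
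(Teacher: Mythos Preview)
Your proof is correct and considerably cleaner than the paper's. Both arguments proceed by showing that the conditional mean is a strictly increasing function of $h$, but the execution is very different.

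The paper first standardizes to $\mu=0$, $\sigma=1$, then writes the centroid explicitly as
\[
\Psi(h)=h+\frac{f(u-h)-f(\ell-h)}{\mathcal{Q}(u-h)+\Phi(\ell-h)},
\]
differentiates in $h$, and reduces positivity of $\Psi'$ to an inequality $\Omega(x_1,x_2)>0$ involving $f$, $\Phi$, and $\mathcal{Q}$ evaluated at the two endpoints. To establish $\Omega>0$ it invokes the Mills-ratio bound from Abramowitz--Stegun (Formula~7.1.13), $\mathcal{Q}(x)/f(x)>2/(x+\sqrt{x^2+4})$, and performs a sequence of algebraic manipulations to close the argument. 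The proof is thus tied to the specific two-sided form $\mathcal{S}=\mathbb{R}\setminus(\ell,u)$ and to a nontrivial external inequality.

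Your route bypasses all of this by exploiting the Gaussian score identity $\partial_h g(x)=(x-m)\sigma^{-2}g(x)$, after which the quotient-rule computation collapses to $C'(h)=\sigma^{-2}\operatorname{Var}(X\mid X\in\mathcal{S})>0$. This is the exponential-family fact that the derivative of a (conditional) mean with respect to the natural location parameter equals the (conditional) variance. Besides being much shorter, your argument never uses the shape of $\mathcal{S}$: it proves the stronger statement that $\langle g\rangle_{\mathcal{S}}>\langle f\rangle_{\mathcal{S}}$ for \emph{every} Borel set $\mathcal{S}$ of positive Lebesgue measure (and finite first moment under $g$), whereas the paper's endpoint calculus is specific to the complement of an interval. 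The only analytic care required, which you note, is dominated convergence for differentiating under the integral; Gaussian tails make this routine.
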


\begin{proof}
First, we will show that it is necessary and sufficient to consider 
\begin{align*}
    \langle \hat{g} \rangle_{\hat{\mathcal{S}}} > \langle \hat{f} \rangle_{\hat{\mathcal{S}}} 
\end{align*}
instead of the inequality \eqref{eqn: averagae ineq}, where
\begin{align*}
    \hat{f}(x) = \frac{1}{\sqrt{2 \pi}} \exp{ \Big( - \frac{1}{2} x^2 \Big)},
\end{align*}
i.e., the pdf of a random variable $\hat{\boldsymbol{X}} \sim \mathcal{N}(0,1)$,
$\hat{g}(x) = \hat{f}(x - \hat{h})$ with $\hat{h} = \frac{h}{\sigma}$,
and $\hat{\mathcal{S}} = \mathbb{R} \setminus (\hat{\ell}, \hat{u})$ with $\hat{u} = \frac{u-\mu}{\sigma}$ and $\hat{\ell} = \frac{\ell-\mu}{\sigma}$.

Consider the term $\int_{\mathcal{{S}}} x g(x) dx$ as follows:
\begin{align*}
    \int_{\mathcal{{S}}} x g(x) dx 
    = \int_{\mathcal{S}} \frac{x}{\sqrt{2 \pi \sigma^2}} \exp{ \Big( -\frac{1}{2 \sigma^2} (x-h-\mu)^2 \Big) } dx.
\end{align*}
By substituting $x = \sigma t + \mu$, we get 
\begin{align*}
    \int_{\mathcal{{S}}} x g(x) dx 
    &= \int_{\hat{\mathcal{S}}} (\sigma t + \mu) \cdot \frac{1}{\sqrt{2 \pi}} \exp{ \Big( -\frac{1}{2} \Big( t - \frac{h}{\sigma} \Big)^2 \Big)  } dt \\
    &= \sigma \int_{\hat{\mathcal{S}}} \frac{t}{\sqrt{2 \pi}} \exp{ \Big( -\frac{1}{2}(t - \hat{h})^2 \Big) } dt + \mu \int_{\hat{\mathcal{S}}} \frac{1}{\sqrt{2 \pi}} \exp{ \Big( -\frac{1}{2}(t - \hat{h})^2 \Big) } dt \\
    &= \sigma \int_{\hat{\mathcal{S}}} t \hat{g}(t) dt + \mu  \int_{\hat{\mathcal{S}}}  \hat{g}(t) dt.
\end{align*}
Then, consider the term $\int_{\mathcal{{S}}} g(x) dx$ as follows:
\begin{align*}
    \int_{\mathcal{{S}}} g(x) dx 
    = \int_{\mathcal{S}} \frac{1}{\sqrt{2 \pi \sigma^2}} \exp{ \Big( -\frac{1}{2 \sigma^2} (x-h-\mu)^2 \Big) } dx.
\end{align*}
Again, by substituting $x = \sigma t + \mu$, we get
\begin{align*}
    \int_{\mathcal{{S}}} g(x) dx 
    = \int_{\hat{\mathcal{S}}} \frac{1}{\sqrt{2 \pi}} \exp{ \Big( -\frac{1}{2} \Big( t - \frac{h}{\sigma} \Big)^2 \Big)  } dt 
    = \int_{\hat{\mathcal{S}}} \frac{1}{\sqrt{2 \pi}} \exp{ \Big( -\frac{1}{2} ( t - \hat{h} )^2 \Big)  } dt 
    = \int_{\hat{\mathcal{S}}}  \hat{g}(t) dt.
\end{align*}
Therefore, using the definition of the centroid of a function \eqref{def: average function}, we obtain that
\begin{align}
    \langle g \rangle_{\mathcal{S}}
    = \frac{1}{\int_{\mathcal{S}} g(x) dx} \cdot \int_{\mathcal{S}} x g(x) dx 
    = \mu + \frac{\sigma}{\int_{\hat{\mathcal{S}}} \hat{g}(x) dx} \cdot \int_{\hat{\mathcal{S}}} x \hat{g}(x) dx 
    = \mu + \sigma \langle \hat{g} \rangle_{\hat{\mathcal{S}}}. \label{eqn: average g}
\end{align}
In fact, the above analysis holds for any $h \in \mathbb{R}$. By letting $h=0$, we also obtain that
\begin{align}
    \langle f \rangle_{\mathcal{S}} = \mu + \sigma \langle \hat{f} \rangle_{\hat{\mathcal{S}}} . \label{eqn: average f} 
\end{align}
Since $\sigma \in \mathbb{R}_{>0}$, substituting the expressions \eqref{eqn: average g} and \eqref{eqn: average f} into \eqref{eqn: averagae ineq} yields
\begin{align*}
    \langle g \rangle_{\mathcal{S}} > \langle f \rangle_{\mathcal{S}}
    \quad \Longleftrightarrow \quad 
    \langle \hat{g} \rangle_{\hat{\mathcal{S}}} > \langle \hat{f} \rangle_{\hat{\mathcal{S}}}
\end{align*}
which proves the claim.

The claim suggests that without loss of generality, we can consider 
\begin{align}
    f(x) = \frac{1}{\sqrt{2 \pi}} \exp{ \Big( - \frac{1}{2} x^2 \Big)}, \label{eqn: N Gaussian pdf}
\end{align}
i.e., the standard Gaussian distribution, instead of the one in \eqref{eqn: Gaussian pdf}. To simplify the notation, starting from here we will stick to the expression of $f$ in \eqref{eqn: N Gaussian pdf}. Before moving on to the next claim, consider
\begin{align}
    \int_{\mathcal{S}} x g(x) dx
    &= \int_{\mathcal{S}} \frac{x}{\sqrt{2 \pi}} \exp{ \Big( -\frac{1}{2} (x-h)^2 \Big) } dx \nonumber \\
    &= h \int_{\mathcal{S}'} \frac{1}{\sqrt{2 \pi}} \exp{ \Big( -\frac{1}{2} x^2 \Big) } dx
    +  \int_{\mathcal{S}'} \frac{x}{\sqrt{2 \pi}} \exp{ \Big( -\frac{1}{2} x^2 \Big) } dx  \nonumber\\
    &= h \int_{\mathcal{S}'} f(x) dx +  \int_{\mathcal{S}'} \frac{x}{\sqrt{2 \pi}} \exp{ \Big( -\frac{1}{2} x^2 \Big) } dx  \label{eqn: numerator g}
\end{align}
where $\mathcal{S}' = \mathbb{R} \setminus (\ell -h, u-h)$. And,
\begin{align}
    \int_{\mathcal{S}} g(x) dx
    = \int_{\mathcal{S}} \frac{1}{\sqrt{2 \pi}} \exp{ \Big( -\frac{1}{2} (x-h)^2 \Big) } dx
    = \int_{\mathcal{S}'} \frac{1}{\sqrt{2 \pi}} \exp{ \Big( -\frac{1}{2} x^2 \Big) } dx
    = \int_{\mathcal{S}'} f(x) dx. \label{eqn: denominator g}
\end{align}
However, we can simplify the last term on the RHS of \eqref{eqn: numerator g} as follows:
\begin{align}
    \int_{\mathcal{S}'} \frac{x}{\sqrt{2 \pi}} \exp{ \Big( -\frac{1}{2} x^2 \Big) } dx
    &= \int_{-\infty}^{\ell-h} \frac{x}{\sqrt{2 \pi}} \exp{ \Big( -\frac{1}{2} x^2 \Big) } dx 
    + \int_{u-h}^{\infty} \frac{x}{\sqrt{2 \pi}} \exp{ \Big( -\frac{1}{2} x^2 \Big) } dx \nonumber \\
    &= \frac{1}{\sqrt{2 \pi}} \Big[  -\exp{ (-x) } \Big]_{x=\infty}^{x= \frac{1}{2} (\ell -h)^2}
    + \frac{1}{\sqrt{2 \pi}} \Big[ -\exp{ (-x) } \Big]_{x=\frac{1}{2} (u-h)^2}^{x=\infty} \nonumber \\
    &= \frac{1}{\sqrt{2 \pi}} \exp{ \Big( -\frac{1}{2} (u-h)^2 \Big) } 
    - \frac{1}{\sqrt{2 \pi}} \exp{ \Big( -\frac{1}{2} (\ell-h)^2 \Big) } \nonumber \\
    &= f(u-h) - f(\ell-h).  \label{eqn: integral simplify}
\end{align}
Using the equations \eqref{eqn: numerator g}, \eqref{eqn: denominator g} and \eqref{eqn: integral simplify}, we can write
\begin{align}
    \langle g \rangle_{\mathcal{S}}
    &= \frac{1}{\int_{\mathcal{S}} g(x) dx} \cdot \int_{\mathcal{S}} x g(x) dx \nonumber \\
    &= \frac{1}{\int_{\mathcal{S}'} f(x) dx} \bigg[ h \int_{\mathcal{S}'} f(x) dx + \big( f(u-h) - f(\ell-h) \big) \bigg] \nonumber \\
    &= h + \frac{f(u-h) - f(\ell-h)}{ \int_{\mathcal{S}'} f(x) dx }. \label{eqn: average g_1}
\end{align}
To simplify the term $\int_{\mathcal{S}'} f(x) dx$ in \eqref{eqn: average g_1}, we define $\Phi: \mathbb{R} \to [0,1]$ to be the \textit{cumulative distribution function} (cdf) of the standard Gaussian distribution $f$, i.e.,
\begin{align*}
    \Phi(x) \triangleq \int_{-\infty}^{x} \frac{1}{\sqrt{2 \pi}} \exp{ \Big( - \frac{1}{2} t^2 \Big)} dt
    = \int_{-\infty}^{x} f(t) dt
\end{align*}
and define $\mathcal{Q}: \mathbb{R} \to [0,1]$ to be the \textit{tail distribution function} of the standard Gaussian distribution $f$, i.e.,
\begin{align*}
    \mathcal{Q}(x) \triangleq \int_{x}^{\infty} \frac{1}{\sqrt{2 \pi}} \exp{ \Big( - \frac{1}{2} t^2 \Big)} dt
    = \int_{x}^{\infty} f(t) dt.
\end{align*}
Note that $\Phi(x) + \mathcal{Q}(x) = 1$ and $\mathcal{Q}(-x) = \Phi(x)$ for all $x \in \mathbb{R}$. With these definitions in hand, we can rewrite the term $\int_{\mathcal{S}'} f(x) dx$ in \eqref{eqn: average g_1} as
\begin{align*}
    \int_{\mathcal{S}'} f(x) dx 
    = \int_{-\infty}^{\ell-h} f(x) dx + \int_{u-h}^{\infty} f(x) dx 
    = \Phi(\ell - h) + \mathcal{Q}(u-h).
\end{align*}
Therefore, the equation \eqref{eqn: average g_1} becomes
\begin{align*}
    \langle g \rangle_{\mathcal{S}}
    = h + \frac{f(u-h) - f(\ell-h)}{ \mathcal{Q}(u-h) + \Phi(\ell - h) }.
\end{align*}
Since $g(x) = f(x-h)$, we have
\begin{align*}
    \langle f \rangle_{\mathcal{S}}
    = \langle g \rangle_{\mathcal{S}} \Big|_{h=0}
    = \frac{f(u) - f(\ell)}{ \mathcal{Q}(u) + \Phi(\ell) }.
\end{align*}

For fixed $u$, $\ell \in \mathbb{R}$ (with $u > \ell$), define a function $\Psi: \mathbb{R} \to \mathbb{R}$ to be
\begin{align}
    \Psi(h) \triangleq h + \frac{f(u-h) - f(\ell-h)}{ \mathcal{Q}(u-h) + \Phi(\ell - h) }.  \label{def: Psi}
\end{align}
Notice that $\Psi(h) = \langle g \rangle_{\mathcal{S}}$ and $\Psi(0) = \langle f \rangle_{\mathcal{S}}$.
Next, we want to show that for all $h \in \mathbb{R}$,
\begin{align*}
    \frac{d}{dh} \Psi(h) > 0 .
\end{align*}
The derivative can be expressed as
\begin{multline}
    \frac{d}{dh} \Psi(h) = 1 + \frac{1}{ \big[ \mathcal{Q}(u-h) + \Phi(\ell - h) \big]^2} \cdot 
    \Big\{ \big[ \mathcal{Q}(u-h) + \Phi(\ell - h) \big] \frac{d}{dh} \big[ f(u-h) - f(\ell-h) \big] \\
    - \big[ f(u-h) - f(\ell-h) \big] \frac{d}{dh} \big[ \mathcal{Q}(u-h) + \Phi(\ell - h) \big] \Big\}.  \label{eqn: diff Psi}
\end{multline}
Suppose $c \in \mathbb{R}$ is a constant. We will simplify some expressions as follows.
\begin{align}
    \frac{d}{dh} f(c-h) 
    &= \frac{1}{\sqrt{2 \pi}} \frac{d}{dh} \exp{ \Big( -\frac{1}{2} (c-h)^2 \Big) }  \nonumber \\
    &= (c-h) \cdot \frac{1}{\sqrt{2 \pi}} \exp{ \Big( -\frac{1}{2} (c-h)^2 \Big) } \nonumber \\
    &= (c-h) f(c-h). \label{eqn: diff f}
\end{align}
Since $\Phi(x)$ is the cdf of the standard Gaussian distribution and $\mathcal{Q}(x) = 1 - \Phi(x)$ for any $x \in \mathbb{R}$, we have
\begin{align}
    \frac{d}{dh} \Phi(c - h) = - f(c-h)
    \quad \text{and} \quad 
    \frac{d}{dh} \mathcal{Q}(c - h) = f(c-h).  \label{eqn: diff Phi}
\end{align}
Applying the expressions \eqref{eqn: diff f} and \eqref{eqn: diff Phi} to \eqref{eqn: diff Psi}, we obtain that
\begin{align*}
    \frac{d}{dh} \Psi(h) 
    &= 1 + \frac{ \big[ (u-h)f(u-h) - (\ell-h)f(\ell-h) \big] \big[ \mathcal{Q}(u-h) + \Phi(\ell - h) \big] - \big[ f(u-h)-f(\ell-h) \big]^2 }{ \big[ \mathcal{Q}(u-h) + \Phi(\ell - h) \big]^2 } \\
    &= \frac{1}{ \big[ \mathcal{Q}(u-h) + \Phi(\ell - h) \big]^2} \cdot \Big\{ \big[ (u-h)f(u-h) - (\ell-h)f(\ell-h) \big] \big[ \mathcal{Q}(u-h) + \Phi(\ell - h) \big] \\
    & \qquad\qquad\qquad\qquad\qquad 
    + \big[ \mathcal{Q}(u-h) + \Phi(\ell - h) \big]^2 - \big[f(u-h)-f(\ell-h) \big]^2 \Big\}.
\end{align*}
Since $\big[ \mathcal{Q}(u-h) + \Phi(\ell - h) \big]^2 > 0$ for all $h \in \mathbb{R}$, we can consider only the term in the curly bracket. 
Define a function $\Omega: \mathbb{R} \times \mathbb{R} \to \mathbb{R}$ to be
\begin{align}
    \Omega(x_1, x_2) \triangleq \big( x_1 f(x_1) - x_2 f(x_2) \big) \big( \mathcal{Q}(x_1) + \Phi(x_2) \big) + \big( \mathcal{Q}(x_1) + \Phi(x_2) \big)^2 - \big( f(x_1) - f(x_2) \big)^2.  \label{def: Omega}
\end{align}
If we can show that $\Omega(x_1, x_2) > 0$ for all $x_1, x_2 \in \mathbb{R}$, this implies $\frac{d}{dh} \Psi(h) > 0$ for all $h \in \mathbb{R}$ which is our desired result.

Before we proceed, let's derive a lower bound of the ratios $\frac{\mathcal{Q}(x)}{2 f(x)}$ and $\frac{\Phi(x)}{2 f (x)}$.
Consider the inequality \cite[Formula~7.1.13]{abramowitz1948handbook}
\begin{align}
    \exp{ (x^2) } \int_x^{\infty} \exp{( -t^2 )} dt 
    > \frac{1}{x + \sqrt{x^2+2}}  \label{eqn: erf ineq}
\end{align}
which holds for all $x \in \mathbb{R}$.
Using the substitution $t = \frac{v}{\sqrt{2}}$, we have
\begin{align}
    \exp{ (x^2) } \int_x^{\infty} \exp{( -t^2 )} dt
    = \frac{1}{\sqrt{2}}\exp{ (x^2) } \int_{ \sqrt{2} x}^{\infty} \exp{ \Big( - \frac{1}{2}v^2 \Big)} dv  \label{eqn: sub erf}
\end{align}
Combining \eqref{eqn: erf ineq} and \eqref{eqn: sub erf} together and using substitution $x = \frac{z}{\sqrt{2}}$ to obtain
\begin{align*}
    \frac{1}{\sqrt{2}} \exp{ \Big( \frac{1}{2}z^2 \Big)} \int_{z}^{\infty} \exp{ \Big( - \frac{1}{2}v^2 \Big)} dv 
    > \frac{1}{\frac{z}{\sqrt{2}} + \sqrt{\frac{z^2}{2}+2}}
\end{align*}
which is equivalent to (by multiplying both sides by $\frac{1}{\sqrt{2}}$ and renaming the variables)
\begin{align}
    \frac{\mathcal{Q}(x)}{2 f(x)} 
    = \frac{ \frac{1}{\sqrt{2 \pi}} \int_{x}^{\infty} \exp{ \big( - \frac{1}{2} t^2 \big)} dt }{ 2 \cdot \frac{1}{\sqrt{2 \pi}} \exp{ \big( - \frac{1}{2} x^2 \big)}  }
    > \frac{1}{x + \sqrt{x^2 + 4}}.   \label{eqn: Q/f ineq}
\end{align}
In order to get the lower bound of $\frac{\Phi(x)}{2 f (x)}$, we substitute $x = -z$ into \eqref{eqn: Q/f ineq} to get
\begin{align*}
    \frac{\mathcal{Q}(-z)}{2 f(-z)} > \frac{1}{-z + \sqrt{z^2 + 4}}.
\end{align*}
Using the properties $\mathcal{Q}(-z) = \Phi(z)$ and $f(-z) = f(z)$ for all $z \in \mathbb{R}$ (and renaming the variable) yields
\begin{align}
    \frac{\Phi(x)}{2 f(x)} > \frac{1}{-x + \sqrt{x^2 + 4}}. \label{eqn: Phi/f ineq}
\end{align}

From the lower bound \eqref{eqn: Q/f ineq}, for all $x_1 \in \mathbb{R}$, we have
\begin{align}
    &\frac{\mathcal{Q}(x_1)}{2 f(x_1)}
    > \frac{1}{x_1 + \sqrt{x_1^2 +4}}
    = \frac{1}{4} \big( -x_1 + \sqrt{x_1^2+4} \big) \nonumber \\
    \Longleftrightarrow \quad 
    &\big( 2 \mathcal{Q}(x_1) + x_1 f(x_1) \big) - \sqrt{x_1^2+4} \cdot f(x_1) > 0. \label{eqn: Q ineq}
\end{align}
On the other hand, from the lower bound \eqref{eqn: Phi/f ineq}, for all $x_2 \in \mathbb{R}$, we have
\begin{align}
    &\frac{\Phi(x_2)}{2 f (x_2)} 
    > \frac{1}{- x_2 + \sqrt{x_2^2 +4}}
    = \frac{1}{4} \big( x_2 + \sqrt{x_2^2+4} \big) \nonumber \\
    \Longleftrightarrow \quad 
    &\big( 2 \Phi(x_2) - x_2 f(x_2) \big) - \sqrt{x_2^2+4} \cdot f(x_2) > 0.  \label{eqn: Phi ineq}
\end{align}
Summing the inequalities \eqref{eqn: Q ineq} and \eqref{eqn: Phi ineq} together yields
\begin{align}
    2 \big[ \mathcal{Q}(x_1) + \Phi(x_2) \big] + \big[ x_1 f(x_1) - x_2 f(x_2) \big] 
    - \Big[ \sqrt{x_1^2+4} \cdot f(x_1) + \sqrt{x_2^2+4} \cdot f(x_2) \Big] > 0. \label{eqn: Q Phi ineq}
\end{align}
Since $x_1 + \sqrt{x_1^2+4} > 0$ for all $x_1 \in \mathbb{R}$ and $-x_2 + \sqrt{x_2^2+4} > 0$ for all $x_2 \in \mathbb{R}$, we have
\begin{align*}
    &\Big( x_1 + \sqrt{x_1^2+4} \Big) f(x_1) + \Big( - x_2 + \sqrt{x_2^2+4} \Big) f(x_2) > 0 \\
    \Longrightarrow \quad 
    &  2 \big[ \mathcal{Q}(x_1) + \Phi(x_2) \big] + \Big( x_1 + \sqrt{x_1^2+4} \Big) f(x_1) + \Big( - x_2 + \sqrt{x_2^2+4} \Big) f(x_2) > 0 \\
    \Longleftrightarrow \quad
    & 2 \big[ \mathcal{Q}(x_1) + \Phi(x_2) \big] + \big[ x_1 f(x_1) - x_2 f(x_2) \big] 
    + \Big[ \sqrt{x_1^2+4} \cdot f(x_1) + \sqrt{x_2^2+4} \cdot f(x_2) \Big] > 0
\end{align*}
Multiplying both sides of \eqref{eqn: Q Phi ineq} by the term on the LHS of the inequality above yields
\begin{align*}
    \Big\{ 2 \big[ \mathcal{Q}(x_1) + \Phi(x_2) \big] + \big[ x_1 f(x_1) - x_2 f(x_2) \big] \Big\}^2
    - \Big[ \sqrt{x_1^2+4} \cdot f(x_1) + \sqrt{x_2^2+4} \cdot f(x_2) \Big]^2 > 0.
\end{align*}
Simplify the above inequality to get
\begin{multline*}
    \big[ x_1 f(x_1) - x_2 f(x_2) \big] \big[ \mathcal{Q}(x_1) + \Phi(x_2) \big] + \big[ \mathcal{Q}(x_1) + \Phi(x_2) \big]^2 \\
    + \frac{1}{4} \Big\{ \big[ x_1 f(x_1) - x_2 f(x_2) \big]^2
    - \Big[ \sqrt{x_1^2+4} \cdot f(x_1) + \sqrt{x_2^2+4} \cdot f(x_2) \Big]^2 \Big\} > 0.
\end{multline*}
Using the definition of $\Omega(x_1, x_2)$ in \eqref{def: Omega}, we can rewrite the above inequality as 
\begin{multline}
    \Omega(x_1, x_2) + \big[ f(x_1) - f(x_2) \big]^2 \\
    + \frac{1}{4} \Big\{ \big[ x_1 f(x_1) - x_2 f(x_2) \big]^2
    - \Big[ \sqrt{x_1^2+4} \cdot f(x_1) + \sqrt{x_2^2+4} \cdot f(x_2) \Big]^2 \Big\} > 0. \label{eqn: Omega+}
\end{multline}
We will show that 
\begin{align}
    \big[ f(x_1) - f(x_2) \big]^2 
    + \frac{1}{4} \Big\{ \big[ x_1 f(x_1) - x_2 f(x_2) \big]^2
    - \Big[ \sqrt{x_1^2+4} \cdot f(x_1) + \sqrt{x_2^2+4} \cdot f(x_2) \Big]^2 \Big\} < 0 \label{eqn: f1 f2 ineq}
\end{align}
which implies that $\Omega(x_1, x_2) > 0$ from \eqref{eqn: Omega+} as desired.

Consider the second term on the LHS of \eqref{eqn: f1 f2 ineq} as follows:
\begin{align*}
    &\frac{1}{4} \Big\{ \big[ x_1 f(x_1) - x_2 f(x_2) \big]^2
    - \Big[ \sqrt{x_1^2+4} \cdot f(x_1) + \sqrt{x_2^2+4} \cdot f(x_2) \Big]^2 \Big\} \\
    = \; &\frac{1}{4} \Big\{ \big[ x_1^2 f^2(x_1) + x_2^2 f^2(x_2) - 2 x_1 x_2 f(x_1) f(x_2) \big] \\
    & \qquad\qquad\qquad\qquad
    - \Big[ (x_1^2 + 4) f^2(x_1) + (x_2^2 + 4) f^2(x_2) 
    + 2 \sqrt{x_1^2+4} \sqrt{x_2^2+4} \cdot f(x_1) f(x_2) \Big] \Big\} \\
    = \; &  -f^2(x_1) - f^2(x_2) 
    - \frac{1}{2} x_1 x_2 f(x_1) f(x_2)
    - \frac{1}{2} \sqrt{x_1^2+4} \sqrt{x_2^2+4} \cdot f(x_1) f(x_2). 
\end{align*}
Thus, the term on the LHS of \eqref{eqn: f1 f2 ineq} becomes
\begin{align*}
    &\big[ f(x_1) - f(x_2) \big]^2 
    + \frac{1}{4} \Big\{ \big[ x_1 f(x_1) - x_2 f(x_2) \big]^2
    - \Big[ \sqrt{x_1^2+4} \cdot f(x_1) + \sqrt{x_2^2+4} \cdot f(x_2) \Big]^2 \Big\} \\
    = \; & - 2 f(x_1) f(x_2) - \frac{1}{2} x_1 x_2 f(x_1) f(x_2)
    - \frac{1}{2} \sqrt{x_1^2+4} \sqrt{x_2^2+4} \cdot f(x_1) f(x_2) \\
    = \; & - \Big[ 2 + \frac{1}{2} \Big( x_1 x_2 + \sqrt{x_1^2+4} \sqrt{x_2^2+4} \Big) \Big] f(x_1) f(x_2).
\end{align*}
Since $f(x_1) f(x_2) > 0$ for all $x_1, x_2 \in \mathbb{R}$, it remains to show that
\begin{align*}
    x_1 x_2 + \sqrt{x_1^2+4} \sqrt{x_2^2+4} > 0
\end{align*}
for all $x_1, x_2 \in \mathbb{R}$.
Consider the positive expression for all $x_1, x_2 \in \mathbb{R}$ below:
\begin{align*}
    4 x_1^2 + 4 x_2^2 + 16 &> 0 \\
    \Longleftrightarrow \quad
    x_1^2 x_2^2 +  4 x_1^2 + 4 x_2^2 + 16 &> x_1^2 x_2^2 \\
    \Longleftrightarrow \quad
    (x_1^2 + 4) (x_2^2 + 4) &> x_1^2 x_2^2 \\
    \Longleftrightarrow \quad
    \sqrt{x_1^2+4} \sqrt{x_2^2+4} &> | x_1 x_2 |  \\
    \Longrightarrow \quad
    \sqrt{x_1^2+4} \sqrt{x_2^2+4} &> - x_1 x_2 \\
    \Longleftrightarrow \quad
    \sqrt{x_1^2+4} \sqrt{x_2^2+4} + x_1 x_2 &> 0.
\end{align*}
Again, this means that the inequality \eqref{eqn: f1 f2 ineq} holds for all $x_1, x_2 \in \mathbb{R}$. Thus, $\Omega(x_1, x_2) > 0$ for all $x_1, x_2 \in \mathbb{R}$ as discussed earlier (where $\Omega(x_1, x_2)$ is defined in \eqref{def: Omega}). Consequently, we have proved the claim $\frac{d}{dh} \Psi(h) > 0$ for all $h \in \mathbb{R}$ or equivalently, $\Psi(h)$ is a strictly increasing function. In particular, for $h \in \mathbb{R}_{>0}$, we have
\begin{align*}
    \langle g \rangle_{\mathcal{S}} = \Psi(h) > \Psi(0) = \langle f \rangle_{\mathcal{S}}
\end{align*}
from the definition of $\Psi(h)$ in \eqref{def: Psi}, which completes the proof.
\end{proof}

\bibliographystyle{IEEEtran}
\bibliography{ref}

\end{document}